\theoremstyle{plain}
\newtheorem{theorem}{Theorem}[section]
\newtheorem{lemma}[theorem]{Lemma}
\newtheorem{proposition}{Proposition}
\theoremstyle{definition}
\newtheorem{definition}[theorem]{Definition}
\newtheorem{remark}{Remark}
\newtheorem{example}{Example}
\def \d {\mathrm{d}}
\title[Stability for the Ornstein-Uhlenbeck equation] %Use the shortened version of the full title
      {Stability estimates for initial data in general Ornstein-Uhlenbeck equations}
\author{S. E. Chorfi}
\author{L. Maniar}
\address{S. E. Chorfi, L. Maniar, Cadi Ayyad University, Faculty of Sciences Semlalia, LMDP, UMMISCO (IRD-UPMC), B.P. 2390, Marrakesh, Morocco}
\email{s.chorfi@uca.ac.ma, maniar@uca.ac.ma}
\subjclass[2020]{35R30, 93B07, 93B05, 35K65}
\keywords{Inverse problem, observability, logarithmic convexity, Ornstein-Uhlenbeck equation}
\begin{document}
\begin{abstract}
We consider the inverse problem of determining initial data in general Ornstein-Uhlenbeck equations on the Euclidean space from partial measurement localized on the so-called thick sets. Using the logarithmic convexity technique and recent observability results, we prove new stability estimates of logarithmic rate for large classes of initial data. Such stability estimates are crucial when dealing with the numerical reconstruction of initial data. Our analysis covers both cases: the analytic Ornstein-Uhlenbeck semigroup on $L^2\left(\mathbb R^N, \mathrm{d}\mu\right)$ with invariant measure $\mu$, and the non-analytic Ornstein-Uhlenbeck semigroup on $L^2\left(\mathbb R^N, \mathrm{d} x\right)$ with the Lebesgue measure. We treat general equations with a diffusion matrix and the Ornstein-Uhlenbeck equation with fractional diffusion in the latter case. This allows us to extend some recent results and simplify some parts of the proof.
\end{abstract}

\dedicatory{\large Dedicated to the memory of Professor Hammadi Bouslous}
\maketitle

\section{Introduction}
Let $N\ge 1$ be an integer and $T >0$ be a fixed terminal time. We consider the following Ornstein-Uhlenbeck equation
\begin{empheq}[left =\empheqlbrace]{alignat=2}
\begin{aligned}\label{e1}
&\partial_t u = \mathrm{tr}\left(Q\nabla^2 u\right) + B x\cdot \nabla u, \qquad 0<t<T , && x\in \mathbb{R}^N, \\
& u\rvert_{t=0}=u_0(x),   && x \in \mathbb{R}^N,
\end{aligned}
\end{empheq}
where $Q=\left(q_{i j}\right)_{1\le i,j\le N}$ is a real symmetric matrix that is positive definite, the drift $0\neq B=\left(b_{i j}\right)_{1\le i,j\le N}$ is a real matrix, while $u_0 \in L^2\left(\mathbb{R}^N\right)$ is an unknown initial datum. The dot notation $``\cdot"$ stands for the standard scalar product in $\mathbb{R}^N$ (also denoted by $\left\langle\cdot, \cdot\right\rangle$). More precisely, we have
\begin{align*}
\mathrm{tr}\left(Q\nabla^2 \cdot\right)&=\sum_{i,j=1}^N q_{ij} \partial_{x_i, x_j}^2,\\
B x \cdot \nabla \cdot &=\sum_{i, j=1}^N b_{i j} x_j \partial_{x_i}.
\end{align*}
The Ornstein-Uhlenbeck equation \eqref{e1} has been mainly studied in two Lebesgue spaces:
\begin{itemize}
    \item $L^2$-space with the invariant measure $L^2_\mu\left(\mathbb{R}^N\right)$: in this case, the Ornstein-Uhlenbeck $C_0$-semigroup is analytic of angle less than $\frac{\pi}{2}$. The existence of an invariant measure $\mu$ holds if and only if $\sigma(B)\subset \{z \in \mathbb{C} \colon \mathrm{Re}\, z <0\}$;
    \item $L^2$-space with the Lebesgue measure $L^2\left(\mathbb{R}^N\right)$: in this setting, the Ornstein-Uhlenbeck operator generates a $C_0$-semigroup that is non-analytic.
\end{itemize}
Henceforth, we denote by $L^2$ either the space $L^2_\mu\left(\mathbb{R}^N\right)$ or the space $L^2\left(\mathbb{R}^N\right)$.

The focus of this paper is on the inverse initial data problem, which involves determining unknown initial data $u_0$ that belong to an admissible set $\mathcal{I}$. This determination is achieved through the following measurement
$$u\rvert_{(0,T) \times \omega},$$
where $\omega \subset \mathbb{R}^N$ is a suitable set of measurements that will be made precise later. Due to the ill-posed nature of such an inverse problem, we mainly aim to address the following question:
\begin{align*}
    \textbf{Stability:} &\text{ is it possible to estimate } \|u_0\|_{L^2} \text{ by a suitable norm}\\
    &\text{ of the quantity } u\rvert_{\left(0, T\right)\times \omega}?
\end{align*}
Within the general parabolic setting, a logarithmic rate is expected for the stability estimate, that is, there exist constants $c>0$ and $\alpha \in (0,1]$ such that
\begin{equation} \label{lsi}
\|u_0\|_{L^2} \le \frac{c}{|\log \|u\rvert_\omega\||^\alpha}
\end{equation}
for all $u_0\in \mathcal{I}$, and for a norm of $u\rvert_\omega$ suitably chosen, e.g., $\|u\|_{L^2\left(0, T ; L^{2}(\omega)\right)}$ or $\|u\|_{H^1\left(0, T ; L^{2}(\omega)\right)}$.

The above estimate \eqref{lsi} is known as conditional stability, which depends on the admissible set $\mathcal{I}$. It should be noted that the rate of conditional stability varies depending on the chosen admissible set. Additionally, conditional stability is particularly relevant when working on the numerical reconstruction of initial data, as demonstrated in several previous works such as \cite{LYZ'09, YZ'08}.

The Ornstein-Uhlenbeck equation is widely studied in view of its various applications, for instance, in stochastic processes, finance, fluid dynamics, and control theory, among other fields. More details can be found in various references \cite{BP'18, CFMP'05, LRM'16, Lu'97, MPP'02, MPRS'03}, but the list is not exhaustive.

Few studies have been conducted for inverse problems of evolution equations with unbounded coefficients such as the Ornstein-Uhlenbeck equation. For instance, Lorenzi \cite{Lo'11} studied the existence and uniqueness of a constant in the drift term. An abstract stability estimate has recently been obtained in  \cite{ACM'21} for general parabolic equations with an arbitrary analyticity angle. The result has been applied to the equation \eqref{e1} with $Q=I_N$ in $L^2_\mu\left(\mathbb{R}^N\right)$, where $I_N$ stands for the identity matrix. The proof is indirect and relies on a sharp lower bound of a harmonic function that appears in the logarithmic convexity estimate, see \cite{AN'63}, \cite{KP'60}, and \cite{Pa'75}. More recently in \cite{CM'23}, the authors have studied the non-analytic case in $L^2\left(\mathbb{R}^N\right)$. In the present paper, we give a direct proof for the analytic case and extend the results in both cases for a general constant matrix $Q$ by considering the Ornstein-Uhlenbeck equation with fractional diffusion in $L^2\left(\mathbb{R}^N\right)$. Finally, it is worth mentioning that logarithmic convexity has been recently extended to time-fractional evolution equations \cite{CMY'22}.

The rest of the paper will be structured as follows: in Section \ref{sec2}, we recall some preliminaries needed throughout the paper. Section \ref{sec3} is devoted to a stability estimate in the space $L^2_\mu\left(\mathbb{R}^N\right)$, where we use logarithmic convexity for analytic semigroups. In particular, we improve some recent results in terms of measurement sets. Finally, we conclude with Section \ref{sec4}, in which we prove a stability estimate for the Ornstein-Uhlenbeck equation with fractional diffusion in the space $L^2\left(\mathbb{R}^N\right)$. Here, we prove a new logarithmic convexity estimate, which holds for this fractional non-analytic case.

\section{Preliminaries}\label{sec2}
We start by gathering preliminary results on the Ornstein-Uhlenbeck equation and beyond, which we will use in the subsequent sections. We refer to the papers \cite{LMP'20, Me'01} and their bibliography for more details.

\subsection{The Ornstein-Uhlenbeck semigroup}
The Ornstein-Uhlenbeck semigroup associated with \eqref{e1} admits an explicit representation formula (also known as Kolmogorov's formula):
\begin{equation}\label{kf}
(\mathcal{T}_0(t) f)(x)=\frac{1}{(4 \pi)^{\frac{N}{2}} (\operatorname{det} Q_{t})^{\frac{1}{2}}} \int_{\mathbb{R}^{N}} \mathrm{e}^{-\frac{1}{4}\left\langle Q_{t}^{-1} y, y\right\rangle} f\left(\mathrm{e}^{t B} x-y\right)\, \d y, \qquad t> 0,
\end{equation}
where $$Q_t=\int_0^t \mathrm{e}^{\tau B}Q\,\mathrm{e}^{\tau B^\top}\, \d \tau,$$
with $B^\top$ denotes the transpose matrix of $B$. The formula \eqref{kf} holds for a large class of functions, e.g., for bounded continuous functions on $\mathbb R^N$.

\subsection*{In the space \texorpdfstring{$L^2_\mu\left(\mathbb{R}^N\right)$}{}}\label{subsec21}
The Ornstein-Uhlenbeck semigroup $(\mathcal{T}_\mu(t))_{t\ge 0}$, on the weighted space $L^2_\mu\left(\mathbb{R}^N\right):=L^2(\mathbb{R}^N,\d \mu)$, enjoys remarkable properties, where $\mu$ denotes the invariant measure i.e.,
$$\int_{\mathbb{R}^{N}} \mathcal{T}_\mu(t) f\, \d \mu=\int_{\mathbb{R}^{N}} f \,\d \mu$$
for all $t\ge 0$ and all $f\in C_b(\mathbb R^N)$ (the space of bounded continuous functions). The existence of $\mu$ is characterized by the following spectral condition
\begin{equation} \label{spec}
\sigma(B) \subset \{z\in \mathbb{C} \colon \mathrm{Re}\,z <0\}.
\end{equation}
Under this condition, the (unique) invariant measure $\mu$ is given by
$$\d \mu(x)=\frac{1}{(4 \pi)^{\frac{N}{2}} (\operatorname{det} Q_{\infty})^{\frac{1}{2}}} \mathrm{e}^{-\frac{1}{4}\left\langle Q_{\infty}^{-1} x, x\right\rangle}\, \d x =:\rho(x)\, \d x,$$
where $$Q_\infty:=\int_0^{\infty} \mathrm{e}^{\tau B} Q\,\mathrm{e}^{\tau B^\top}\, \d \tau.$$
The infinitesimal generator of $(\mathcal{T}_\mu(t))_{t\ge 0}$ is given by
\begin{equation}
\begin{aligned}
\mathcal{A}_\mu &:=\mathrm{tr}\left(Q\nabla^2 \cdot\right) + B x\cdot \nabla \cdot,\\
D(\mathcal{A}_\mu) & =H_{\mu}^{2}:=\left\{u \in L_{\mu}^{2}\left(\mathbb{R}^N\right): \partial^{\alpha} u \in L_{\mu}^{2}\left(\mathbb{R}^N\right) \text { for }|\alpha| \le 2\right\}.
\end{aligned}
\end{equation}
We define the fractional spaces $H_{\mu}^{s}$ for $s>0$, as follows
\begin{equation}
\begin{aligned}\label{fdom}
& H_{\mu}^{s} :=\left\{f \in L_{\mu}^{2}\left(\mathbb{R}^N\right): x \mapsto f(x) \mathrm{e}^{-\frac{1}{8}\left\langle Q_{\infty}^{-1} x, x\right\rangle} \in H^{s}\left(\mathbb{R}^{N}\right)\right\}, \\
&\|f\|_{H_{\mu}^{s}} :=\left\|f \mathrm{e}^{-\frac{1}{8}\left\langle Q_{\infty}^{2} x, x\right\rangle}\right\|_{H^{s}\left(\mathbb{R}^{N}\right)}. 
\end{aligned}
\end{equation}
The operator $-\mathcal{A}_\mu$ is nonnegative on $L^2_\mu\left(\mathbb{R}^N\right)$, and the domain of the fractional powers $(-\mathcal{A}_\mu)^\varepsilon$ is given by
\begin{align}
D((-\mathcal{A}_\mu)^\varepsilon)=H^{2\varepsilon}_\mu, \qquad \varepsilon \in (0,1),
\end{align}
with equivalent norms.

The next theorem states the analyticity of $(\mathcal{T}_\mu(t))_{t\ge 0}$ and gives the precise value of the analyticity angle.
\begin{theorem}[see \cite{CFMP'05}]
The semigroup $(\mathcal{T}_\mu(t))_{t\ge 0}$ is contractive and analytic in the sector of optimal angle $\psi \in \left(0,\dfrac{\pi}{2}\right]$ given by
$$\cot \psi=2\left\|\frac{1}{2} I_N+Q^{-\frac{1}{2}} Q_{\infty} B^\top Q^{-\frac{1}{2}}\right\|.$$
\end{theorem}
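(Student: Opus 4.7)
The plan is to prove sectoriality of $-\mathcal A_\mu$ at the level of its associated closed sesquilinear form on $L^2_\mu(\mathbb R^N)$; contractivity and analyticity in the optimal sector of angle $\psi$ then both follow from the classical form--semigroup correspondence. Using $\d\mu=\rho\,\d x$ with $\rho(x)\propto\exp(-\tfrac14\langle Q_\infty^{-1}x,x\rangle)$, so that $\nabla\rho=-\tfrac12 Q_\infty^{-1}x\,\rho$, integration by parts on $C_c^\infty$ functions yields
\[
a(u,v):=-\langle\mathcal A_\mu u,v\rangle_\mu=\int Q\nabla u\cdot\nabla\bar v\,\d\mu-\int Mx\cdot\nabla u\,\bar v\,\d\mu,\qquad M:=B+\tfrac12 QQ_\infty^{-1}.
\]
Differentiating the integral defining $Q_\infty$ under the spectral condition \eqref{spec} yields the Lyapunov identity $BQ_\infty+Q_\infty B^\top=-Q$, which rewrites $M=\tfrac12(B-Q_\infty B^\top Q_\infty^{-1})$; in particular $Q_\infty^{-1}M$ is skew-symmetric, $\mathrm{tr}(M)=0$, and $x\mapsto\langle Q_\infty^{-1}x,Mx\rangle$ vanishes identically. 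Plugging these cancellations together with $2\,\mathrm{Re}(\bar u\,\nabla u)=\nabla|u|^2$ into a further integration by parts gives $\mathrm{Re}\int Mx\cdot\nabla u\,\bar u\,\d\mu=0$, whence $\mathrm{Re}\,a(u,u)=\int Q\nabla u\cdot\nabla\bar u\,\d\mu\ge 0$. Thus $\mathcal A_\mu$ is dissipative on $H_\mu^2$ and Lumer--Phillips already gives contractivity of $(\mathcal T_\mu(t))_{t\ge 0}$.

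For the sharp analyticity angle I would reduce to a finite-dimensional numerical-range problem. The change of variables $y=Q_\infty^{-1/2}x$ turns $\mu$ into the standard Gaussian and conjugates $\mathcal A_\mu$ into $\tilde{\mathcal A}=\mathrm{tr}(\tilde Q\nabla^2)+\tilde By\cdot\nabla$ with $\tilde Q=Q_\infty^{-1/2}QQ_\infty^{-1/2}$, $\tilde B=Q_\infty^{-1/2}BQ_\infty^{1/2}$, and new Lyapunov identity $\tilde B+\tilde B^\top=-\tilde Q$. The restriction of $\tilde{\mathcal A}$ to the first Wiener chaos $\mathcal P_1=\mathrm{span}\{y_1,\dots,y_N\}$ (rescaled to be orthonormal in $L^2(\tilde\mu)$) is represented by the constant matrix $\tilde B^\top=-\tfrac12\tilde Q-\tilde M$ with $\tilde M:=\tfrac12(\tilde B-\tilde B^\top)$. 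A weighted Cauchy--Schwarz plus AM--GM applied to the numerical range of $-\tilde B^\top=\tfrac12\tilde Q+\tilde M$ on $\mathbb C^N$ gives the optimal bound
\[
\sup_{0\ne\xi\in\mathbb C^N}\frac{|\mathrm{Im}\,\xi^*(-\tilde B^\top)\xi|}{\mathrm{Re}\,\xi^*(-\tilde B^\top)\xi}=2\bigl\|\tilde Q^{-1/2}\tilde M\tilde Q^{-1/2}\bigr\|,
\]
and undoing the change of variables (using $Q^{1/2}(\tfrac12 I_N+Q^{-1/2}Q_\infty B^\top Q^{-1/2})Q^{1/2}=\tfrac12 Q+Q_\infty B^\top$, which is antisymmetric by Lyapunov) rewrites the right-hand side as $2\|\tfrac12 I_N+Q^{-1/2}Q_\infty B^\top Q^{-1/2}\|$, giving the stated $\cot\psi$. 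To promote this matrix-level bound to the whole of $L^2_\mu$, one invokes the second-quantization picture: under the Wiener chaos isomorphism $L^2_\mu\cong\Gamma_s(\mathbb C^N)$, $\tilde{\mathcal A}$ acts as the differential second quantization $d\Gamma(\tilde B^\top)$, whose numerical range coincides with that of $\tilde B^\top$ on $\mathcal P_1$. Sharpness is witnessed by degree-one polynomial maximizers $u(y)=\alpha\cdot y+i\beta\cdot y$ for $\alpha,\beta\in\mathbb R^N$ saturating the numerical range of $-\tilde B^\top$.

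The main obstacle is the \emph{sharp} constant. A direct Cauchy--Schwarz bound on $|\mathrm{Im}\,a(u,u)|$ is too lossy, since it does not detect that the imaginary part vanishes on constants; the Wiener-chaos reduction to $-\tilde B^\top$ on $\mathbb C^N$, together with the algebraic identification of its numerical range, is the essential mechanism for extracting the exact value of $\psi$.
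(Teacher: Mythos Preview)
The paper does not prove this theorem at all: it is quoted verbatim from \cite{CFMP'05} and used as a black box, so there is no ``paper's proof'' to compare against. Your sketch is therefore not competing with anything in the manuscript; it is a reconstruction of the argument in \cite{CFMP'05}, and as such it is essentially correct. The form computation, the Lyapunov identity $BQ_\infty+Q_\infty B^\top=-Q$, the identification $M=\tfrac12(B-Q_\infty B^\top Q_\infty^{-1})$ with $Q_\infty^{-1}M$ skew and $\mathrm{tr}\,M=0$, and the resulting dissipativity are all right. The reduction of the sharp angle to the numerical range of $-\tilde B^\top=\tfrac12\tilde Q+\tilde M$ on $\mathbb C^N$ is also the correct mechanism, and the algebra $\sup_\xi\dfrac{|\xi^*\tilde M\xi|}{\xi^*\tilde Q\xi}=\|Q^{-1/2}(\tfrac12 Q+Q_\infty B^\top)Q^{-1/2}\|$ goes through via the substitution $\xi=Q_\infty^{1/2}\zeta$, since $\tilde M=-Q_\infty^{-1/2}(\tfrac12 Q+Q_\infty B^\top)Q_\infty^{-1/2}$ and $\tilde Q=Q_\infty^{-1/2}QQ_\infty^{-1/2}$.

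Two small points worth tightening. First, the sentence ``whose numerical range coincides with that of $\tilde B^\top$ on $\mathcal P_1$'' is slightly too strong as stated: what is true, and what you need, is that the \emph{half-angle of the smallest sector containing} the numerical range of $d\Gamma(\tilde B^\top)$ equals that of $\tilde B^\top$, because on the $n$-th chaos the numerical range lies in the $n$-fold Minkowski sum of the first-chaos numerical range, which preserves the sectorial angle. Second, in \cite{CFMP'05} the sharp bound $|\mathrm{Im}\,a(u,u)|\le(\cot\psi)\,\mathrm{Re}\,a(u,u)$ is in fact obtained directly at the form level on all of $H^1_\mu$ by one more integration by parts, without explicitly invoking second quantization; your Wiener-chaos reduction is a legitimate alternative route to the same constant, but it is a reinterpretation rather than the literal argument of the cited source.
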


\subsection{In the space \texorpdfstring{$L^2\left(\mathbb{R}^N\right)$}{}}
In this case, we consider a more general setting. Let $s>0$ be a positive real number. With the same notations in \eqref{e1}, we consider the Ornstein-Uhlenbeck equation with fractional diffusion:
\begin{empheq}[left =\empheqlbrace]{alignat=2}
\begin{aligned}\label{es}
&\partial_t u = -\mathrm{tr}^s\left(-Q\nabla^2 u\right) + B x\cdot \nabla u, \qquad 0<t<T , && x\in \mathbb{R}^N, \\
& u\rvert_{t=0}=u_0(x),   && x \in \mathbb{R}^N,
\end{aligned}
\end{empheq}
where $\mathrm{tr}^s\left(-Q\nabla^2 \cdot\right)$ is the Fourier multiplier whose symbol is $\left\langle Q \xi, \xi\right\rangle^s$. This fractional model has been recently studied in \cite{Al'20, AB'20}, to which we refer for the details. We introduce the fractional Ornstein-Uhlenbeck operator
\begin{equation}
\begin{aligned}
\mathcal{A} &:=-\mathrm{tr}^s\left(-Q\nabla^2 \cdot\right) + B x\cdot \nabla \cdot,\\
D(\mathcal{A}) & =\left\{u \in L^{2}\left(\mathbb{R}^{N}\right) \colon \mathcal{A}u \in L^{2}\left(\mathbb{R}^{N}\right)\right\}.
\end{aligned}
\end{equation}
The operator $\mathcal{A}$ generates a $C_0$-semigroup $(\mathcal{T}(t))_{t\ge 0}$ on $L^2(\mathbb{R}^N)$, but, in contrast to Subsection \ref{subsec21}, the analyticity property is missing, e.g., for $s=1$, see e.g., \cite{Me'01}. Moreover, by the Fourier transform, the semigroup is given by
\begin{equation}\label{eouf}
\widehat{\mathcal{T}(t) u_0}(\xi)=\mathrm{e}^{-\mathrm{tr}(B) t} \exp \left[- \int_0^t\left|Q^{\frac{1}{2}} \mathrm{e}^{-\tau B^\top} \xi\right|^{2 s} \d \tau\right] \widehat{u_0}\left(\mathrm{e}^{-t B^\top} \xi\right),
\end{equation}
where $\widehat{g}$ denotes the Fourier transform of a function $g$:
$$
\widehat{g}(\xi)=\int_{\mathbb R^N} g(x)~\mathrm{e}^{-\mathrm{i} x \cdot \xi}\, \d x.
$$
By a change of variable, we obtain
\begin{equation}\label{sgn}
    \left\|\widehat{\mathcal{T}(t) u_0}\right\|_{L^2\left(\mathbb{R}^N\right)}=\mathrm{e}^{-\frac{\mathrm{tr}(B)}{2}t}\left\|\exp \left[-\int_0^t\left|Q^{\frac{1}{2}} \mathrm{e}^{\tau B^\top} \cdot\right|^{2 s} \d \tau\right] \widehat{u_0}\right\|_{L^2\left(\mathbb{R}^N\right)}.
\end{equation}

\section{Stability estimate in the space \texorpdfstring{$L^2_\mu\left(\mathbb{R}^N\right)$}{}} \label{sec3}
Let $\varepsilon \in (0,1)$ and $M>0$ be fixed. We introduce the admissible set of initial data
\begin{align}
\mathcal{I}_{\varepsilon,M}:=\{u_0\in H^{2\varepsilon}_\mu \colon \|u_0\|_{H^{2\varepsilon}_\mu} \le M\}, \label{init1}
\end{align}
which is a bounded subset of $D((-\mathcal{A}_\mu)^\varepsilon)$. We aim at proving stability for initial data $u_0\in \mathcal{I}_{\varepsilon, M}$ from the partial measurement $u\rvert_\omega$. The general plan of the proof is as follows:
$$\text{(Logarithmic convexity + Observability inequality)} \Longrightarrow \text{Logarithmic stability}.$$

First, the semigroup $(\mathcal{T}_\mu(t))_{t\ge 0}$ is analytic and contractive on $L^2_\mu\left(\mathbb R^N\right)$ of angle $\psi$. Then the following logarithmic convexity estimate is valid.
\begin{lemma}\label{lemlc}
There exists a constant $K_T>0$ such that, for every $u_0\in L^2_\mu\left(\mathbb R^N\right)$, the following inequality holds
\begin{equation}\label{eqlc2}
\|u_\mu(t)\|_{L^2_\mu} \leq K_T \|u_0\|_{L^2_\mu}^{1- (\frac{t}{rT})^{\phi}} \|u_\mu(T)\|_{L^2_\mu}^{(\frac{t}{rT})^{\phi}}, \qquad 0\leq t\leq T,
\end{equation}
where $u_\mu(t)=\mathcal{T}_\mu(t) u_0$ is the associated solution to \eqref{e1}, with $$r=\begin{cases}
1 &\mbox{ if } \psi=\frac{\pi}{2},\\
2 \cos{(\frac{\psi}{2})} &\mbox{ if } \psi<\frac{\pi}{2}, \end{cases} \qquad \qquad \phi=\begin{cases}
1 &\mbox{ if } \psi=\frac{\pi}{2},\\
\frac{\pi}{\psi} &\mbox{ if } \psi<\frac{\pi}{2}.
\end{cases}$$
\end{lemma}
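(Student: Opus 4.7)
The plan is to reduce Lemma \ref{lemlc} to an abstract logarithmic convexity estimate for contractive analytic semigroups of angle $\psi$, applied to $(\mathcal{T}_\mu(t))_{t\ge 0}$. By the theorem quoted just above the lemma, this semigroup is precisely of that type, so the abstract argument applies verbatim.

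First I would use analyticity to extend $t\mapsto u_\mu(t)$ to a holomorphic $L^2_\mu$-valued function $z\mapsto u_\mu(z)=\mathcal{T}_\mu(z)u_0$ on the open sector $\Sigma_\psi=\{z\in\mathbb{C}^{*}:|\arg z|<\psi\}$, and contractivity on this sector to obtain the global pointwise bound $\|u_\mu(z)\|_{L^2_\mu}\le\|u_0\|_{L^2_\mu}$. The real-valued function $v(z):=\log\|u_\mu(z)\|_{L^2_\mu}$ is then subharmonic on $\Sigma_\psi$ and bounded above by $v(0)=\log\|u_0\|_{L^2_\mu}$; it also satisfies the boundary value $v(T)=\log\|u_\mu(T)\|_{L^2_\mu}$ at the endpoint $T$.

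The heart of the proof is a Hadamard three-lines style interpolation for the subharmonic function $v$ on the sector with data at the two real points $0$ and $T$. When $\psi=\pi/2$, the sector is the right half-plane; the classical three-circles theorem applied to a disk through $0$ and $T$ tangent to the imaginary axis yields the linear log-convexity $v(t)\le(1-t/T)v(0)+(t/T)v(T)+\log K_T$, corresponding to $r=\phi=1$. When $\psi<\pi/2$, one conformally maps $\Sigma_\psi$ onto the right half-plane via $w=z^{\pi/(2\psi)}$, applies the half-plane estimate to the pulled-back subharmonic function, and transports the bound back; the exponent then acquires the power $\phi=\pi/\psi$, and the geometric scaling $r=2\cos(\psi/2)$ enters from the largest inscribed disk in the sector whose boundary meets $\{0,T\}$. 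Exponentiating yields \eqref{eqlc2}.

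The hardest part will be justifying the harmonic-measure/Poisson step at the corners $z=0$ and $z=T$, where $v$ is not a priori continuous up to the boundary of the sector; this is traditionally handled by an exhaustion argument (one applies the three-circles bound on sub-sectors of angle $\psi-\eta$ and disks truncated away from the origin, then sends $\eta\to 0$) or, more efficiently, by directly invoking the abstract logarithmic convexity theorem of \cite{ACM'21}, which packages precisely this estimate for contractive analytic semigroups of prescribed angle $\psi$. Once that technical step is in hand, the identification of the explicit exponents $r=2\cos(\psi/2)$ and $\phi=\pi/\psi$ follows from a direct computation with the conformal map $z\mapsto z^{\pi/(2\psi)}$ and the Poisson kernel on the transformed disk, and the constant $K_T$ collects the contributions from the overall multiplicative factors in the three-circles inequality.
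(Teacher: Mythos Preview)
The paper does not actually prove this lemma: its proof is two one-line citations, invoking \cite[Corollary~3.3]{ACM'22} for $\psi=\pi/2$ and \cite[Theorem~3]{KP'60} for $\psi<\pi/2$. Your proposal instead sketches the underlying subharmonic/conformal-mapping argument, which is a legitimate and more informative route, and for $\psi=\pi/2$ your outline is essentially the standard one.

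There is, however, a genuine gap in your $\psi<\pi/2$ step. The conformal map $w=z^{\pi/(2\psi)}$ sends $\Sigma_\psi$ onto the right half-plane and carries the real points $t$, $T$ to $t^{\pi/(2\psi)}$, $T^{\pi/(2\psi)}$; applying the linear half-plane estimate there and pulling back yields the exponent $(t/T)^{\pi/(2\psi)}$, i.e.\ $\phi_0=\pi/(2\psi)$, \emph{not} $\phi=\pi/\psi$. This is precisely the distinction the paper itself draws in the second remark following Theorem~\ref{thm4.4}: the harmonic-measure approach of \cite{ACM'22} produces the pair $\phi_0=\pi/(2\psi)$, $c_\psi^0=\tfrac{2}{\pi}(\psi/\sin\psi)^{\phi_0}$, whereas the Krein--Prozorovskaya argument of \cite{KP'60} gives the pair $\phi=\pi/\psi$, $r=2\cos(\psi/2)$ actually stated in the lemma. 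Your sketch, as written, reproduces the former and therefore does not prove the lemma with the claimed constants. Likewise, your geometric explanation of $r=2\cos(\psi/2)$ via ``the largest inscribed disk in the sector whose boundary meets $\{0,T\}$'' does not stand: for $\psi<\pi/2$ there is no disk contained in $\Sigma_\psi$ passing through both $0$ and $T$, since already the disk of diameter $[0,T]$ leaves the sector.

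So you have three options: (i) cite \cite{KP'60} directly, as the paper does; (ii) rework the complex-analytic step so as to obtain $\phi=\pi/\psi$ and $r=2\cos(\psi/2)$, which requires a different conformal construction than the half-plane reduction you describe; or (iii) accept the weaker exponent $\phi_0=\pi/(2\psi)$, which your argument does yield and which, as the paper notes, is equally sufficient for the downstream stability estimate in Theorem~\ref{thm4.4}.
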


\begin{proof}
The case $\psi=\frac{\pi}{2}$ follows from \cite[Corollary 3.3]{ACM'22}.
The case $\psi<\frac{\pi}{2}$ is contained in \cite[Theorem 3]{KP'60}.
\end{proof}

Next, we introduce a geometric condition provided for the observability inequality. We refer to \cite{EV'18}, \cite{WWZZ'19}, and the references therein for more details.
\begin{definition}
Let $\lambda \in (0,1]$ and $a=\left(a_{1}, \ldots, a_{N}\right) \in \mathbb{R}^{N}$ such that $a_i>0$, $i=1,2,\ldots,N$. Let us consider the rectangle $\mathcal{R}=\left[0, a_{1}\right] \times \ldots \times\left[0, a_{N}\right]$.
\begin{itemize}
\item[(i)] A measurable subset $\omega \subset \mathbb{R}^{N}$ is said to be $(\lambda, a)$-thick if
$$
\left|\omega \cap \left(x+\mathcal{R}\right)\right| \geq \lambda \prod_{j=1}^{N} a_{j} \qquad \forall x \in \mathbb{R}^{N},  
$$
with $\left|E\right|$ is the Lebesgue measure of a measurable subset $E \subset \mathbb{R}^{N}$.
\item[(ii)] A measurable subset $\omega \subset \mathbb{R}^{N}$ is thick if there exist $\lambda \in (0,1]$ and $a \in \mathbb{R}^{N}$ (as above) such that $\omega$ is $(\lambda, a)$-thick.
\item[(iii)] A measurable set $\omega \subset \mathbb{R}^{N}$ is $\lambda$-thick at scale $L>0$ if $\omega$ is $(\lambda, a)$-thick with $a=(L, \ldots, L) \in \mathbb{R}^{N}$.
\end{itemize}
\end{definition}

\begin{example} Here are some basic examples:
\begin{itemize}
    \item[$\bullet$] In $\mathbb R$, the set $\omega=\bigcup\limits_{n \in \mathbb{Z}}\left[n, n+\frac{1}{2}\right]$ is $\frac{1}{2}$-thick at scale $1$.
    \item[$\bullet$] In $\mathbb{R}^2$, a set given by a periodic arrangement of cubes is thick.
\end{itemize}
\end{example}

The thickness of a subset $\omega$ gives a sufficient condition for the final state observability (see \cite{TW'09}) for equations in unbounded domains such as \eqref{e1} (it is also a necessary condition for the heat equation on $\mathbb R^N$).

Next, we recall the final state observability for system \eqref{e1} in $L^2_\mu\left(\mathbb R^N\right)$ from thick sets. This is a direct consequence of \cite[Theorem 5]{BEP'20} in the autonomous case and the proof of \cite[Corollary 1.7]{BP'18}.
\begin{lemma}
Let $\omega \subset \mathbb{R}^N$ be a thick set. If the spectral condition \eqref{spec} holds, then for all $T>0$, there exists a positive constant $\kappa_T=\kappa_T(\omega,T)$ such that for all $u_0 \in L^2_\mu$, we have
\begin{equation}\label{obsineq}
\|u_\mu(T,\cdot)\|_{L^2_\mu}^2 \leq \kappa_T^2 \int_0^T \|u_\mu(t,\cdot)\|_{L^2_\mu(\omega)}^2\,\d t,
\end{equation}
where $u_\mu$ is the associated solution with \eqref{e1}, with $L^2_\mu(\omega):=L^2(\omega,\d \mu)$. 
\end{lemma}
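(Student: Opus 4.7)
The plan is to derive \eqref{obsineq} directly from two existing observability results rather than develop new machinery: the general final-state observability theorem \cite[Theorem 5]{BEP'20} for autonomous parabolic equations on $\mathbb{R}^N$, applied to the Ornstein-Uhlenbeck semigroup, and the argument in the proof of \cite[Corollary 1.7]{BP'18}, which adapts such estimates to the weighted $L^2_\mu$ setting.

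First I would invoke \cite[Theorem 5]{BEP'20}. That theorem provides final-state observability from thick sets for a large class of autonomous parabolic equations whose semigroup satisfies a Logvinenko--Sereda-type spectral inequality together with suitable dissipation bounds. The Ornstein--Uhlenbeck semigroup fits these hypotheses: the Kolmogorov representation \eqref{kf} supplies the required Gaussian smoothing and hence Fourier-side decay, while the classical Logvinenko--Sereda uncertainty principle on thick sets supplies the spectral inequality. This step yields an inequality of the form \eqref{obsineq} in the unweighted $L^2(\mathbb{R}^N)$ for the semigroup $(\mathcal{T}_0(t))_{t\ge 0}$.

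Next I would transfer this unweighted estimate to the weighted space $L^2_\mu$ following the pattern of the proof of \cite[Corollary 1.7]{BP'18}. The key ingredients are: (i) the spectral condition \eqref{spec} guarantees the invariant measure $\mu$ with explicit Gaussian density $\rho$; (ii) the invariance identity $\int \mathcal{T}_\mu(t) f \, \mathrm{d}\mu = \int f \, \mathrm{d}\mu$ for $f\in C_b(\mathbb{R}^N)$; and (iii) the pointwise coincidence of $\mathcal{T}_\mu(t)$ and $\mathcal{T}_0(t)$ on a dense class of functions (via \eqref{kf}), which allows the two observability statements to be compared on an approximating sequence before passing to the limit.

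The main obstacle I anticipate is precisely this transfer step. Since $\rho$ is Gaussian one has the continuous inclusion $L^2(\mathbb{R}^N)\hookrightarrow L^2_\mu(\mathbb{R}^N)$, but \emph{not} the reverse; polynomials, for instance, live in $L^2_\mu$ but not in $L^2$. Thus \eqref{obsineq} is in principle a statement for a strictly larger class of initial data than the unweighted inequality furnished by the first step. The \cite{BP'18} argument circumvents this by exploiting the explicit Gaussian structure of $\rho$ together with density/smoothing properties of the OU semigroup, so that the inequality is first established on a dense subspace of $L^2_\mu$ and then extended. Carrying the procedure over to our setting with a general positive definite diffusion matrix $Q$ (rather than $Q=I_N$) should require only cosmetic modifications, since both the invariant measure formula and Kolmogorov's formula accommodate general $Q$ without structural change.
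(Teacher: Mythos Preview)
Your proposal coincides with the paper's approach: the paper does not supply its own proof but simply records, in the sentence preceding the lemma, that it is a direct consequence of \cite[Theorem~5]{BEP'20} (specialized to the autonomous case) together with the proof of \cite[Corollary~1.7]{BP'18}, which is exactly the two-step route you outline. Your discussion of the $L^2\to L^2_\mu$ transfer and the non-reversible inclusion goes beyond the paper's terse citation, but the underlying strategy is the same.
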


Let us introduce some notations. First, we set
\begin{align}\label{csts}
    c_\psi:=\left(\frac{1}{r}\right)^{\phi}.
\end{align}
The incomplete Gamma function is defined by
\begin{equation*}
\Gamma(a,x):=\int_x^{\infty} t^{a-1} \mathrm{e}^{-t}\, \d t, \qquad a>0, \quad x \ge 0.
\end{equation*}
In particular, the Gamma function is given by
\begin{equation*}
\Gamma(a)=\Gamma(a,0):=\int_0^{\infty} t^{a-1} \mathrm{e}^{-t}\, \d t, \qquad a>0.
\end{equation*}

Combining the logarithmic convexity \eqref{eqlc2} and the observability inequality \eqref{obsineq}, we obtain the main result of this section.
\begin{theorem}\label{thm4.4}
Let $\omega \subset \mathbb{R}^N$ be a thick set. If the spectral condition \eqref{spec} is satisfied, then there exist positive constants $K(M,T,\varepsilon,\kappa_T)$ and $\alpha=\alpha(\varepsilon)\in (0,1)$ such that, for all $u_0 \in H_\mu^{2\varepsilon}$ with $\left\|u_0\right\|_{H_\mu^{2\varepsilon}} \le M$, one has
\begin{equation}
\|u_0\|_{L^2_\mu} \le K \left(\frac{\Gamma\left(\frac{1}{\phi}\right)}{\left(-c_\psi p\log \|u_\mu\|_{L^2\left(0,T ; L^2_\mu(\omega)\right)}\right)^{\frac{1}{\phi}}\phi}\right)^{\alpha},
\end{equation}
provided that $\|u_\mu\|_{L^2\left(0,T ; L^2_\mu(\omega)\right)}$ is sufficiently small.
\end{theorem}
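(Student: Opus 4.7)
The plan is to combine Lemma~\ref{lemlc} (logarithmic convexity) with the observability inequality \eqref{obsineq} and with a smoothing estimate exploiting the regularity $u_0\in H^{2\varepsilon}_\mu=D((-\mathcal{A}_\mu)^\varepsilon)$. First I would prove the polynomial smoothing bound
\[
\|u_\mu(t)-u_0\|_{L^2_\mu}\le C_\varepsilon\, M\, t^\varepsilon,\qquad 0\le t\le T,
\]
by writing $u_\mu(t)-u_0=\int_0^t \mathcal{A}_\mu \mathcal{T}_\mu(s)u_0\,\d s$, factoring $\mathcal{A}_\mu\mathcal{T}_\mu(s)u_0=-(-\mathcal{A}_\mu)^{1-\varepsilon}\mathcal{T}_\mu(s)(-\mathcal{A}_\mu)^\varepsilon u_0$, and using the standard analytic-semigroup estimate $\|(-\mathcal{A}_\mu)^{1-\varepsilon}\mathcal{T}_\mu(s)\|_{\mathcal{L}(L^2_\mu)}\le C s^{\varepsilon-1}$ together with the equivalence of $\|\cdot\|_{H^{2\varepsilon}_\mu}$ with the graph norm of $(-\mathcal{A}_\mu)^\varepsilon$.

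Next, for an auxiliary parameter $\tau\in(0,T]$ to be optimised later, I integrate the pointwise inequality $\|u_0\|_{L^2_\mu}^2\le 2\|u_\mu(t)-u_0\|_{L^2_\mu}^2+2\|u_\mu(t)\|_{L^2_\mu}^2$ over $t\in(0,\tau)$. The smoothing step controls the first integrand and contributes a term of order $M^2\tau^{2\varepsilon+1}$. For the second, Lemma~\ref{lemlc} combined with the admissibility $\|u_0\|_{L^2_\mu}\le M$ yields, with $\lambda:=\log(M/\|u_\mu(T)\|_{L^2_\mu})$,
\[
\|u_\mu(t)\|_{L^2_\mu}^2\le K_T^2 M^2\exp\!\bigl(-2c_\psi\lambda(t/T)^\phi\bigr).
\]
Enlarging the range of integration to $(0,\infty)$ and changing variable $s=t/T$ produces the generalised Gaussian integral
\[
T\int_0^\infty \exp\!\bigl(-2c_\psi\lambda s^\phi\bigr)\,\d s=\frac{T\,\Gamma(1/\phi)}{\phi\,(2c_\psi\lambda)^{1/\phi}},
\]
which is precisely where the factor $\Gamma(1/\phi)/\phi$ in the statement originates.

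Plugging in the observability bound $\|u_\mu(T)\|_{L^2_\mu}\le \kappa_T m$, with $m:=\|u_\mu\|_{L^2(0,T;L^2_\mu(\omega))}$, gives $\lambda\ge \log(M/(\kappa_T m))$ as soon as $m$ is small. After dividing by $\tau$, I obtain an inequality of the form
\[
\|u_0\|_{L^2_\mu}^2\le C_1 M^2\tau^{2\varepsilon}+\frac{C_2 M^2}{\tau\,\lambda^{1/\phi}},
\]
with $C_2$ containing $T\,\Gamma(1/\phi)/(\phi\,(2c_\psi)^{1/\phi})$. Minimising the right-hand side in $\tau$ leads to the optimal choice $\tau_*\sim \lambda^{-1/(\phi(2\varepsilon+1))}$, which lies in $(0,T]$ exactly under the stated smallness hypothesis on $m$, and produces $\|u_0\|_{L^2_\mu}\le K M\,\lambda^{-\varepsilon/(\phi(2\varepsilon+1))}$. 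Recasting the exponent matches the claimed bound with $\alpha=\varepsilon/(2\varepsilon+1)\in(0,1)$ and an explicit positive constant $p$ absorbing $2c_\psi$ and $\log(M/\kappa_T)$.

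The analytical ingredients are all available from the preceding lemmas; the main obstacle I anticipate is the bookkeeping needed to surface the exact factor $\Gamma(1/\phi)/\phi$ and the precise value of $\alpha$, as well as verifying that the optimal $\tau_*$ remains in $(0,T]$, which is the meaning of ``$\|u_\mu\|_{L^2(0,T;L^2_\mu(\omega))}$ sufficiently small'' in the statement.
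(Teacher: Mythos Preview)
Your argument is correct and leads to the stated estimate, but it follows a genuinely different route from the paper. The paper does not split $\|u_0\|$ via $\|u_\mu(t)-u_0\|+\|u_\mu(t)\|$ and optimise over an auxiliary time $\tau$. Instead, it integrates the $p$-th power of the logarithmic-convexity bound over $(0,T)$ to control $\|u_\mu\|_{L^p(0,T;L^2_\mu)}$ (this is where the incomplete Gamma integral appears), separately bounds $\|u_\mu'\|_{L^p(0,T;L^2_\mu)}$ using the analyticity estimate $\|(-\mathcal{A}_\mu)^{1-\varepsilon}\mathcal{T}_\mu(t)\|\le M_\varepsilon t^{-(1-\varepsilon)}$ for $p\in(1,1/(1-\varepsilon))$, interpolates the two to $W^{1-\gamma,p}(0,T;L^2_\mu)$, and finally invokes the embedding $W^{1-\gamma,p}(0,T;L^2_\mu)\hookrightarrow C([0,T];L^2_\mu)$ (valid for $\gamma<1-1/p$) to recover $\|u_0\|_{L^2_\mu}$. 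In the paper $p$ is the genuine Lebesgue exponent and $\alpha=\gamma/p$; in your scheme the formula is matched with $p=2$ and $\alpha=\varepsilon/(2\varepsilon+1)$.

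Your approach is more elementary: it avoids Bochner--Sobolev interpolation and the embedding theorem, replacing them by a one-parameter optimisation. The paper's approach, on the other hand, produces a whole family of admissible exponents $\alpha=\gamma/p$ and makes the role of $p$ transparent. One small point of bookkeeping: you cannot ``absorb $2c_\psi$ and $\log(M/\kappa_T)$ into $p$'' since $c_\psi$ already appears explicitly in the statement; rather, your proof yields $p=2$, and the additive constant $\log(M/\kappa_T)$ in $\lambda$ is handled by the smallness of $m$ (for $m$ small enough, $\lambda\ge -\tfrac12\cdot 2\log m$, say), which is exactly what ``sufficiently small'' covers.
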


\begin{proof}
We will modify the proof of \cite[Theorem 3.8]{ACM'22}. By inequality \eqref{eqlc2}, we have
\begin{equation}\label{lcc}
    \|u_\mu(t)\|_{L^2_\mu} \le \kappa_M \left(\frac{\|u_\mu(T)\|_{L^2_\mu}}{\kappa_M}\right)^{c_\psi (\frac{t}{T})^{\phi}},
\end{equation}
where $\kappa_M>0$ is a constant dependent on $M$ ($K$ will be a generic constant). Let $p>1$ and set $E=\left(\dfrac{\|u_\mu(T)\|_{L^2_\mu}}{\kappa_M}\right)^p$. Without loss of generality, we may assume $0<E<1$. By virtue of \eqref{lcc} and a change of variable, we deduce
\begin{align*} \int_0^{T} \|u_\mu(t)\|_{L^2_\mu}^{p} \,\d t & \le \kappa_M^p \int_0^{T} E^{c_\psi (\frac{t}{T})^{\phi}}\, \d t\\
&= \kappa_M^p T \int_0^1 E^{c_\psi t^{\phi}}\, \d t\\
&= \kappa_M^p T \frac{\Gamma(\frac{1}{\phi})-\Gamma(\frac{1}{\phi}, -c_\psi\log E)}{(-c_\psi\log E)^{\frac{1}{\phi}}\phi}.
\end{align*}
Therefore,
\begin{align}\label{E1}
\|u_\mu\|_{L^p\left(0, T ; L^2_\mu\right)} \le \kappa_M T^{\frac{1}{p}}\left(\frac{\Gamma(\frac{1}{\phi})-\Gamma(\frac{1}{\phi}, -c_\psi\log E)}{(-c_\psi\log E)^{\frac{1}{\phi}}\phi}\right)^{\frac{1}{p}}.
\end{align}
By the fractional powers properties, we have
$$
u_\mu'(t)=-(-\mathcal{A}_\mu)^{1-\varepsilon} \mathrm{e}^{t \mathcal{A}_\mu}(-\mathcal{A}_\mu)^{\varepsilon} u_{0},
$$
and the analyticity of the semigroup yields
$$
\|u_\mu'(t)\|_{L^2_\mu} \le \frac{M_\varepsilon}{t^{1-\varepsilon}} \|u_0\|_{D\left((-\mathcal{A}_\mu)^{\varepsilon}\right)}.
$$
After integration we obtain
$$
\|u_\mu'\|_{L^p\left(0, T; L^2_\mu\right)} \le M \frac{M_\varepsilon T^{\frac{1}{p}-(1-\varepsilon)}}{(1-p(1-\varepsilon))^{\frac{1}{p}}}.
$$
On the other hand, we have $\|u_\mu(t)\|_{L^2_\mu} \le \|u_0\|_{L^2_\mu}$ for all $t\in [0,T]$. Then,
$$\|u_\mu\|_{L^p\left(0, T ; L^2_\mu\right)} \le \kappa_M T^{\frac{1}{p}}.$$
Hence, for $p \in \left(1,\dfrac{1}{1-\varepsilon}\right)$, we derive
\begin{align}
\|u_\mu\|_{W^{1, p}\left(0, T ; L^2_\mu\right)} \le \kappa_M T^{\frac{1}{p}}\left(1+\frac{M_\varepsilon}{T^{1-\varepsilon}(1-p(1-\varepsilon))^{\frac{1}{p}}}\right). \label{e2}
\end{align}
Let $0<\gamma<1$ and $\alpha:=\frac{\gamma}{p}$. An interpolation inequality for \eqref{E1} and \eqref{e2} yields
\begin{align}
\|u_\mu\|_{W^{1-\gamma, p}\left(0, T ; L^2_\mu\right)} \le C \kappa_M T^{\frac{1}{p}}\left(1+\frac{M_\varepsilon}{T^{1-\varepsilon}(1-p(1-\varepsilon))^{\frac{1}{p}}}\right)^{1-\gamma} \nonumber\\
\times \left(\frac{\Gamma(\frac{1}{\phi})-\Gamma(\frac{1}{\phi}, -c_\psi\log E)}{(-c_\psi\log E)^{\frac{1}{\phi}}\phi}\right)^{\alpha}. \label{intrp}
\end{align}
By virtue of the continuous embedding
$$
W^{1-\gamma, p}\left(0, T ; L^2_\mu\left(\mathbb{R}^N\right)\right) \subset C\left(\left[0, T\right] ; L^2_\mu\left(\mathbb{R}^N\right)\right)
$$
for $\gamma \in \left(0,1-\dfrac{1}{p}\right)$, we obtain
\begin{align*}
\|u_0\|_{L^2_\mu} \leq K\left(\frac{\Gamma(\frac{1}{\phi})-\Gamma(\frac{1}{\phi}, -c_\psi\log E)}{(-c_\psi\log E)^{\frac{1}{\phi}}\phi}\right)^{\alpha}.
\end{align*}
By using the observability inequality in time $T$,
$$\|u_\mu(T)\|_{L^2_\mu} \le \kappa_T \|u_\mu\|_{L^2(0,T;L^2_\mu(\omega))},$$
we deduce that
$$\left\|u_0\right\|_{L^2_\mu} \le K \left(\frac{\Gamma\left(\frac{1}{\phi}\right)-\Gamma\left(\frac{1}{\phi}, -c_\psi p \log \left(\kappa_T\kappa_M^{-1} \|u_\mu\|_{L^2(0,T;L^2_\mu(\omega))}\right)\right)}{\left(-c_\psi p \log \left(\kappa_T\kappa_M^{-1} \|u_\mu\|_{L^2(0,T;L^2_\mu(\omega))}\right)\right)^{\frac{1}{\phi}}\phi}\right)^{\alpha}.$$
Therefore,
$$\left\|u_0\right\|_{L^2_\mu} \le K \left(\frac{\Gamma\left(\frac{1}{\phi}\right)}{\left(-c_\psi p \log \|u_\mu\|_{L^2(0,T;L^2_\mu(\omega))}\right)^{\frac{1}{\phi}}\phi}\right)^{\alpha}.$$
\end{proof}

Some remarks are in order:
\begin{remark}
Theorem \ref{thm4.4} extends and improves \cite[Proposition 4.5]{ACM'22} that holds for the diffusion matrix $Q=I_N$ and for observation sets $\omega$ satisfying a stronger geometrical condition. Here we have relaxed the measurement sensors to thick sets.
\end{remark}

\begin{remark}
Being different from \cite[Theorem 3.8]{ACM'22} that is based on a sharp lower bound for a harmonic function (see e.g., \cite{Mi'75}), we have given here a direct proof based on Lemma \ref{lemlc}. We have obtained the same kind of estimate, but with different constants $\phi$ and $c_\psi$, where in \cite{ACM'22} the constants were 
\begin{align*}
\phi_0=\frac{\pi}{2\psi} \qquad \text{ and } \qquad c_\psi^0=\frac{2}{\pi} \left(\frac{\psi}{\sin \psi}\right)^{\phi_0},    
\end{align*}
which are compatible with the case $\psi=\frac{\pi}{2}$.
\end{remark}

\section{Stability estimate for the fractional equation in \texorpdfstring{$L^2\left(\mathbb{R}^N\right)$}{}} \label{sec4}
In contrast to the previous section, the lack of analyticity limits us to consider the set of admissible initial data:
$$\mathcal{I}_{M}=\left\{u_0 \in L^2\left(\mathbb R^N\right), \, \mathcal{A} u_0 \in L^2\left(\mathbb R^N\right)  \colon \|u_0\|_{D(\mathcal{A})} \leq M\right\},$$
which is a bounded subset of $D(\mathcal{A})$.

Next, we prove a new logarithmic convexity estimate for the fractional Ornstein-Uhlenbeck semigroup $\left(\mathcal{T}(t)\right)_{t\ge 0}$ on $L^2\left(\mathbb{R}^N\right)$, which extends \cite[Proposition 1]{CM'23} for $s=1$ and $Q=I_N$.
\begin{proposition}\label{proplc}
Let $s>0$ and $T>0$. There exists a constant $c=c(s,T)\in (0,1]$ such that, for every $u_0\in L^2\left(\mathbb R^N\right)$, the following inequality holds
\begin{equation}\label{elc0}
\|u(t)\|_{L^{2}\left(\mathbb{R}^{N}\right)} \le \mathrm{e}^{-\frac{\mathrm{tr}(B)}{2}(1-c)t}\|u_0\|_{L^{2}\left(\mathbb{R}^{N}\right)}^{1-c\frac{t}{T}} \|u(T) \|_{L^{2}\left(\mathbb{R}^{N}\right)}^{c\frac{t}{T}}, \qquad t\in [0,T].
\end{equation}
where $u(t)=\mathcal{T}(t)u_0$ is the solution of the fractional equation \eqref{es}.
\end{proposition}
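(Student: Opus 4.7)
\medskip
\noindent
\textbf{Proof plan for Proposition \ref{proplc}.}
The plan is to convert the claimed logarithmic convexity, by Plancherel's theorem, into a pointwise question on the Fourier side, and then close the estimate with a single application of Hölder's inequality. Squaring the target inequality and multiplying through by $\mathrm{e}^{\mathrm{tr}(B)t}$, one sees that \eqref{elc0} is equivalent to
\begin{equation*}
\mathrm{e}^{\mathrm{tr}(B)t}\|u(t)\|_{L^2}^{2} \le \bigl(\|u_0\|_{L^2}^{2}\bigr)^{1-c\frac{t}{T}}\bigl(\mathrm{e}^{\mathrm{tr}(B)T}\|u(T)\|_{L^2}^{2}\bigr)^{c\frac{t}{T}}.
\end{equation*}
With the notation $\varphi(t,\xi):=\int_0^t|Q^{\frac12}\mathrm{e}^{\tau B^\top}\xi|^{2s}\,\d\tau$, identity \eqref{sgn} combined with Plancherel gives
\begin{equation*}
\mathrm{e}^{\mathrm{tr}(B)t}\|u(t)\|_{L^2}^{2}=(2\pi)^{-N}\int_{\mathbb R^N}\mathrm{e}^{-2\varphi(t,\xi)}\,|\widehat{u_0}(\xi)|^{2}\,\d\xi,
\end{equation*}
and analogous formulas hold at $t=0$ and $t=T$. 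Thus the inequality reduces to controlling the measure $\d\nu(\xi)=|\widehat{u_0}(\xi)|^2\d\xi$ integrals of $\mathrm{e}^{-2\varphi(t,\xi)}$, $1$, and $\mathrm{e}^{-2\varphi(T,\xi)}$.

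The key reduction is to prove the pointwise lower bound
\begin{equation*}
\varphi(t,\xi)\ \ge\ c\,\frac{t}{T}\,\varphi(T,\xi),\qquad (t,\xi)\in[0,T]\times\mathbb R^N,
\end{equation*}
for a suitable constant $c=c(s,T)\in(0,1]$. Once this is established, one has $\mathrm{e}^{-2\varphi(t,\xi)}\le\bigl(\mathrm{e}^{-2\varphi(T,\xi)}\bigr)^{ct/T}$ pointwise, and Hölder's inequality with conjugate exponents $p=\tfrac{T}{T-ct}$ and $q=\tfrac{T}{ct}$ (taken with respect to the measure $\d\nu$) yields
\begin{equation*}
\int_{\mathbb R^N}\mathrm{e}^{-2\varphi(t,\xi)}\,\d\nu \ \le\ \Bigl(\int_{\mathbb R^N}\d\nu\Bigr)^{1-c\frac{t}{T}}\Bigl(\int_{\mathbb R^N}\mathrm{e}^{-2\varphi(T,\xi)}\,\d\nu\Bigr)^{c\frac{t}{T}},
\end{equation*}
which, after substituting back the three $L^2$-norms, is precisely the squared version of \eqref{elc0}.

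It remains to verify the pointwise bound, which is the only non-routine step. Using the positive definiteness of $Q$ and the standard operator norm estimates $\mathrm{e}^{-\|B\|\tau}|\xi|\le|\mathrm{e}^{\tau B^\top}\xi|\le\mathrm{e}^{\|B\|\tau}|\xi|$, I would bound the integrand of $\varphi$ from below and above by scalar multiples of $|\xi|^{2s}$, namely
\begin{equation*}
\lambda_{\min}(Q)^{s}\,\mathrm{e}^{-2s\|B\|\tau}|\xi|^{2s}\ \le\ |Q^{\frac12}\mathrm{e}^{\tau B^\top}\xi|^{2s}\ \le\ \lambda_{\max}(Q)^{s}\,\mathrm{e}^{2s\|B\|\tau}|\xi|^{2s}.
\end{equation*}
Integrating on $[0,t]$ and $[0,T]$ respectively and taking the ratio cancels the common factor $|\xi|^{2s}$, so that $\varphi(t,\xi)/\varphi(T,\xi)$ is bounded below, uniformly in $\xi$, by $(t/T)$ times a constant depending only on $\lambda_{\min}(Q),\lambda_{\max}(Q),\|B\|,s,T$; clamping this constant to lie in $(0,1]$ gives the desired $c$. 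The main obstacle is really this last pointwise comparison: one has to treat $|\xi|$ small and $|\xi|$ large together, and the non-symmetric role of $\tau$ in $\mathrm{e}^{\tau B^\top}$ forces the use of two-sided exponential bounds rather than a direct convexity argument on $\varphi$ (which in general is not proportional to $|\xi|^{2s}t$ because $B$ and $Q$ need not commute).
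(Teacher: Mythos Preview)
Your proposal is correct and follows essentially the same route as the paper's proof: reduce via Plancherel and \eqref{sgn} to the pointwise inequality $\varphi(t,\xi)\ge c\,\frac{t}{T}\,\varphi(T,\xi)$, and then conclude by H\"older with exponents $\tfrac{T}{T-ct}$ and $\tfrac{T}{ct}$. The only difference is in how the comparison constant $c$ is produced: the paper observes that $\beta(t,\xi):=\varphi(t,\xi)/t$ is continuous and positive on the compact set $[0,T]\times\mathbb S^{N-1}$ and uses homogeneity in $\xi$ to get $c_1 t|\xi|^{2s}\le\varphi(t,\xi)\le c_2 t|\xi|^{2s}$, whereas you obtain the same two-sided bound from the explicit operator-norm estimates on $Q^{1/2}$ and $\mathrm e^{\tau B^\top}$; your version gives a more explicit (if cruder) value for $c$, while the paper's compactness argument is slightly cleaner.
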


\begin{proof}
By the Plancherel theorem, the estimate \eqref{elc0} is equivalent to the following inequality
\begin{equation*}
\|\widehat{u}(t)\|_{L^{2}\left(\mathbb{R}^{N}\right)} \le \mathrm{e}^{-\frac{\mathrm{tr}(B)}{2}(1-c)t} \|\widehat{u_0}\|_{L^{2}\left(\mathbb{R}^{N}\right)}^{1-c\frac{t}{T}} \|\widehat{u}(T)\|_{L^{2}\left(\mathbb{R}^{N}\right)}^{c\frac{t}{T}}, \qquad t\in [0,T].
\end{equation*}
By the formula \eqref{sgn}, the above inequality is equivalent to
\begin{align}\label{elc1}
&\qquad \left\|\exp\left[-\int_0^t \left|Q^{\frac{1}{2}} \mathrm{e}^{\tau B^\top} \xi\right|^{2 s} \d \tau\right] \widehat{u_0}\right\|_{L^{2}\left(\mathbb{R}^{N}\right)}\\
& \le \|\widehat{u_0}\|_{L^{2}\left(\mathbb{R}^{N}\right)}^{1-c\frac{t}{T}} \left\|\exp\left[-\int_0^T \left|Q^{\frac{1}{2}} \mathrm{e}^{\tau B^\top} \xi\right|^{2 s} \d \tau\right] \widehat{u_0}\right\|_{L^{2}\left(\mathbb{R}^{N}\right)}^{c\frac{t}{T}}, \;\; t\in [0,T]. \notag
\end{align}
We introduce the function
$$\beta(t,\xi)=
\begin{cases}
\displaystyle\frac{1}{t} \int_0^t \left|Q^{\frac{1}{2}} \mathrm{e}^{\tau B^\top} \xi\right|^{2 s} \d \tau &\mbox{ if } t>0,\\
\left|Q^{\frac{1}{2}} \xi\right|^{2 s} &\mbox{ if } t=0.
\end{cases}$$
The function $\beta$ is continuous and positive on the compact set $[0,T]\times \mathbb{S}^{N-1}$, where $\mathbb{S}^{N-1}:=\{\xi \in \mathbb{R}^N: |\xi|=1 \}$ is the $(N-1)$-dimensional unit sphere. Then, there exist positive constants $c_1 \le c_2$ (dependent on $s$ and $T$) such that
$$c_1 t \le \int_0^t \left|Q^{\frac{1}{2}} \mathrm{e}^{\tau B^\top} \xi\right|^{2 s} \d \tau \le c_2 t \qquad \forall t\in [0,T],\quad \forall \xi\in \mathbb{S}^{N-1},$$
and consequently,
$$c_1 t |\xi|^{2s} \le \int_0^t \left|Q^{\frac{1}{2}} \mathrm{e}^{\tau B^\top} \xi\right|^{2 s} \d \tau \le c_2 t |\xi|^{2s} \qquad \forall t\in [0,T],\quad \forall \xi\in \mathbb{R}^N.$$
The above estimate implies
\begin{equation}\label{eqq}
\int_0^t \left|Q^{\frac{1}{2}} \mathrm{e}^{\tau B^\top} \xi\right|^{2 s} \d \tau \ge c_{s,T} \frac{t}{T} \int_0^T \left|Q^{\frac{1}{2}} \mathrm{e}^{\tau B^\top} \xi\right|^{2 s} \d \tau \;\;\;\; \forall t\in [0,T], \;\; \forall \xi\in \mathbb{R}^N,
\end{equation}
where $c:=c_{s,T}=\frac{c_1}{c2} \le 1$. Therefore, for all $t\in [0,T]$ and all $\xi\in \mathbb{R}^N$, we get the pointwise estimate
\begin{align*}
    &\qquad \exp\left[-2\int_0^t \left|Q^{\frac{1}{2}} \mathrm{e}^{\tau B^\top} \xi\right|^{2 s} \d \tau\right] |\widehat{u_0}(\xi)|^2 \\
    & \quad \le |\widehat{u_0}(\xi)|^{2\left(1-c\frac{t}{T}\right)} \left(\exp\left[-2\int_0^T \left|Q^{\frac{1}{2}} \mathrm{e}^{\tau B^\top} \xi\right|^{2 s} \d \tau\right] |\widehat{u_0}(\xi)|^2\right)^{c\frac{t}{T}}.
\end{align*}
We fix $t \in (0,T)$ and apply the Hölder inequality for $p=\frac{T}{T-ct}$ and $q=\frac{T}{ct}$ to the functions 
$$F(\xi)=|\widehat{u_0}(\xi)|^{2\left(1-c\frac{t}{T}\right)},\qquad   G(\xi)=\left(\exp\left[-2\int_0^T \left|Q^{\frac{1}{2}} \mathrm{e}^{\tau B^\top} \xi\right|^{2 s} \d \tau\right] |\widehat{u_0}(\xi)|^2\right)^{c\frac{t}{T}}.$$
We finally obtain \eqref{elc1}. Thus, the proof of \eqref{elc0} is achieved.
\end{proof}

Now, we state the observability inequality of the fractional equation \eqref{es} in $L^{2}\left(\mathbb{R}^{N}\right)$, which holds only for $s>\frac{1}{2}$. For the proof, see Theorem 1.12 in \cite{AB'20}.
\begin{lemma}
We assume that $s>\frac{1}2$ and that $\omega \subset \mathbb{R}^N$ is a thick set. For all $T>0$, there exists a positive constant $\kappa_{s,T}$ such that for all $u_0 \in L^{2}\left(\mathbb{R}^{N}\right)$, we have
\begin{equation}\label{obsineq1}
\|u(T,\cdot)\|_{L^{2}\left(\mathbb{R}^{N}\right)}^2 \leq \kappa_{s,T}^2 \int_0^T \|u(t,\cdot)\|_{L^{2}(\omega)}^2\,\d t,
\end{equation}
where $u$ is the solution associated with \eqref{es}.
\end{lemma}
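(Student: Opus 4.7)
The plan is to reconstruct the argument of \cite{AB'20} by adapting the Lebeau--Robbiano--Miller dissipation--observability duality to the present non-analytic fractional setting on $\mathbb{R}^N$, with the Logvinenko--Sereda spectral inequality for thick sets (cf.~\cite{EV'18, WWZZ'19}) playing the role of the spectral inequality usually available on bounded domains via Carleman estimates.

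First I would record the Logvinenko--Sereda spectral inequality in the form: for any $r>0$ and any $f \in L^2(\mathbb{R}^N)$ whose Fourier transform is supported in the ball $\{\xi : |\xi|\le r\}$, there exist constants $C_1,C_2>0$ depending only on the thickness parameters $(\lambda,a)$ such that
\begin{equation*}
\|f\|_{L^2(\mathbb{R}^N)} \le C_1 \mathrm{e}^{C_2 r}\|f\|_{L^2(\omega)}.
\end{equation*}
Applied to $P_r \mathcal{T}(t)u_0$, where $P_r$ is the Fourier cut-off onto $\{|\xi|\le r\}$, this controls the low-frequency component of the solution by its restriction to $\omega$.

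Second, I would extract a high-frequency dissipation estimate directly from the Fourier representation \eqref{eouf}. Using \eqref{sgn} together with the two-sided bound
\begin{equation*}
c_1 t|\xi|^{2s} \le \int_0^t \left|Q^{\frac{1}{2}} \mathrm{e}^{\tau B^\top} \xi\right|^{2 s} \d \tau \le c_2 t|\xi|^{2s}, \qquad t\in[0,T],\; \xi\in\mathbb{R}^N,
\end{equation*}
already established in the proof of Proposition \ref{proplc}, one obtains
\begin{equation*}
\|(I-P_r)\mathcal{T}(t)u_0\|_{L^2(\mathbb{R}^N)} \le C_T\, \mathrm{e}^{-c_1 r^{2s} t}\|u_0\|_{L^2(\mathbb{R}^N)}, \qquad t\in(0,T].
\end{equation*}
Thus the high-frequency part of $\mathcal{T}(t)u_0$ dissipates at a rate controlled by $r^{2s}$.

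Third, the two ingredients are glued together by a telescoping scheme on a dyadic sequence $T=\tau_0>\tau_1>\dots\to 0$: on each interval $[\tau_{k+1},\tau_k]$ one splits $\mathcal{T}(\tau_k)u_0=P_{r_k}\mathcal{T}(\tau_k)u_0+(I-P_{r_k})\mathcal{T}(\tau_k)u_0$, applies the spectral inequality to the first summand at frequency $r_k$ and bounds the second by the dissipation estimate. Choosing $r_k$ so that $\mathrm{e}^{-c_1 r_k^{2s}(\tau_k-\tau_{k+1})}$ dominates $\mathrm{e}^{C_2 r_k}$, summing the resulting localized observations and using $L^2$ contractivity of $\mathcal{T}(t)$ on the remainders yields \eqref{obsineq1}. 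The main obstacle, and the reason for the hypothesis $s>\frac{1}{2}$, is precisely the convergence of this dyadic cascade: one needs the exponent $2s$ to exceed $1$ so that $r_k^{2s}(\tau_k-\tau_{k+1})$ grows faster than $r_k$ as $r_k\to\infty$, exactly as for the fractional heat equation $\partial_t+(-\Delta)^s$. When $s\le\frac{1}{2}$ the scheme fails and a final-state observability from a merely thick set cannot be expected.
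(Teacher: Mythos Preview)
The paper does not give its own proof of this lemma: it simply states the observability inequality and refers the reader to Theorem~1.12 of \cite{AB'20}. Your proposal therefore goes further than the paper by sketching the Lebeau--Robbiano argument underlying that reference, and the outline you give (Logvinenko--Sereda spectral inequality on thick sets, high-frequency dissipation from the explicit Fourier representation \eqref{eouf}--\eqref{sgn}, and a telescoping scheme whose convergence requires $2s>1$) is indeed the mechanism behind \cite{AB'20}.

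Two minor technical points to flag if you flesh this out. First, the Ornstein--Uhlenbeck semigroup $(\mathcal{T}(t))_{t\ge 0}$ is not contractive on $L^2(\mathbb{R}^N)$ in general; by \eqref{sgn} one only has $\|\mathcal{T}(t)u_0\|_{L^2}\le \mathrm{e}^{-\frac{\mathrm{tr}(B)}{2}t}\|u_0\|_{L^2}$, which may grow if $\mathrm{tr}(B)<0$. On the finite interval $[0,T]$ this introduces only a bounded factor and the scheme survives, but ``$L^2$ contractivity'' should be replaced by ``quasi-contractivity''. Second, the drift term means that $\mathcal{T}(t)$ does not commute with the Fourier cut-off $P_r$: applying $\mathcal{T}(t)$ maps a ball in frequency to the ellipsoid $\mathrm{e}^{tB^\top}\{|\xi|\le r\}$. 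In the dissipation estimate and in bounding $\|P_r\mathcal{T}(t)u_0\|_{L^2(\omega)}$ by the observed quantity plus a high-frequency remainder, one must account for this distortion; since $\|\mathrm{e}^{tB^\top}\|$ is bounded on $[0,T]$, the effect is again only a $T$-dependent constant. With these adjustments your sketch is correct and matches the cited reference.
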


\begin{remark}
In the case $0<s\le \frac{1}{2}$, the lack of the final state observability is known for specific cases of the fractional heat equation, see, for instance, \cite{Ko'17, Ko'18, Ko'20}.
\end{remark}

The main result of this section reads as follows:
\begin{theorem}\label{thmlogstab1}
We assume that $s>\frac{1}2$ and that $\omega \subset \mathbb{R}^N$ is a thick set. There exist positive constants $C$ and $C_1$ depending on $(s, T, \omega, M)$ such that, for all $u_0 \in \mathcal{I}_R$,
\begin{equation}
\|u_0\|_{L^{2}\left(\mathbb{R}^N\right)} \leq \frac{-C}{\log \left(C_1\|u\|_{H^1\left(0, T ; L^{2}(\omega)\right)}\right)}
\end{equation}
for $\|u\|_{H^1\left(0, T ; L^{2}(\omega)\right)}$ sufficiently small, where $u$ is the solution of system \eqref{e1}.
\end{theorem}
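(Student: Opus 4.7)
The plan is to combine the logarithmic convexity of Proposition~\ref{proplc} with the observability \eqref{obsineq1}, exploiting the stronger hypothesis $u_0 \in D(\mathcal{A})$ to control $u'$ uniformly in time, and then to optimize a pointwise interpolation between $u(0)$ and $u(t)$. This is in the same spirit as Theorem~\ref{thm4.4}, but simpler, since no negative power of $t$ appears in the bound on $u'$.

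First, since $u_0 \in D(\mathcal{A})$ with $\|u_0\|_{D(\mathcal{A})} \le M$, differentiating the semigroup gives $u'(t) = \mathcal{T}(t)\mathcal{A} u_0$. From the Fourier representation \eqref{sgn}, the operator norm of $\mathcal{T}(t)$ on $L^2(\mathbb{R}^N)$ is bounded on $[0,T]$ by a constant $M_T$, so $\|u'(t)\|_{L^2(\mathbb{R}^N)} \le M_T M$ for every $t \in [0,T]$. The fundamental theorem of calculus then yields
$$\|u_0 - u(t)\|_{L^2(\mathbb{R}^N)} \le t M_T M, \qquad t \in [0,T].$$

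Second, I insert the a priori bound $\|u_0\|_{L^2} \le M$ into \eqref{elc0} and absorb the exponential prefactor into a constant $\kappa_M > 0$ to obtain $\|u(t)\|_{L^2(\mathbb{R}^N)} \le \kappa_M \eta^{c t/T}$, where $\eta := \|u(T)\|_{L^2(\mathbb{R}^N)}/\kappa_M$. Combined with the previous step,
$$\|u_0\|_{L^2(\mathbb{R}^N)} \le \kappa_M \eta^{c t/T} + t M_T M, \qquad t \in [0,T].$$
Assuming $\eta < 1$ (which is where the smallness hypothesis on $\|u\|_{H^1(0,T;L^2(\omega))}$ enters) and writing $L := -\log \eta$, the substitution $\tau = c t L/T$ recasts the bound as $\kappa_M e^{-\tau} + (TM_TM/(cL))\tau$. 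Optimizing in $\tau$ and choosing the corresponding $t \in (0,T)$ yields, after absorbing the resulting logarithmic factor into a generic constant $C = C(s,T,\omega,M)$ valid in the regime of small $\|u(T)\|_{L^2}$,
$$\|u_0\|_{L^2(\mathbb{R}^N)} \le \frac{C}{L}.$$

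Third, to replace $\|u(T)\|_{L^2(\mathbb{R}^N)}$ by the measured quantity, I apply observability \eqref{obsineq1} together with the elementary inequality $\|u\|_{L^2(0,T;L^2(\omega))} \le \|u\|_{H^1(0,T;L^2(\omega))}$ (coming from the trivial domain-norm embedding) to get
$$\|u(T)\|_{L^2(\mathbb{R}^N)} \le \kappa_{s,T}\, \|u\|_{H^1(0,T;L^2(\omega))},$$
so that $L \ge -\log\!\bigl(C_1\|u\|_{H^1(0,T;L^2(\omega))}\bigr)$ with $C_1 := \kappa_{s,T}/\kappa_M$. Substituting in the bound of the second step concludes the argument.

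The main obstacle is the minimization in the second step: the raw balance of the two terms yields an estimate of order $\log L/L$, so one has to be careful in choosing $\tau$ (or to refine the argument) and to absorb the remaining logarithmic factor into the constant $C$ for $\|u\|_{H^1(0,T;L^2(\omega))}$ sufficiently small. The key ingredient that makes this non-analytic case tractable is the uniform-in-$t$ bound on $u'(t)$ coming from $u_0 \in D(\mathcal{A})$, which replaces the time-singular estimate available in the analytic setting.
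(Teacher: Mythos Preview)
Your approach differs from the paper's: you apply the logarithmic convexity estimate \eqref{elc0} to $u$ and control $\|u_0-u(t)\|$ by the crude linear bound $tM_TM$, whereas the paper applies both the logarithmic convexity \emph{and} the observability inequality to $z:=u_t$ (which solves the same equation with initial datum $\mathcal{A}u_0$), writing $u_0=u(T)-\int_0^T z(\tau)\,\d\tau$ and bounding the integral via \eqref{elc0} applied to $z$.

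There is a genuine gap in your optimization. After the substitution $\tau=ctL/T$ the bound is $g(\tau)=\kappa_M e^{-\tau}+\dfrac{TM_TM}{cL}\tau$, whose minimum, attained at $\tau^*=\log\!\bigl(c\kappa_M L/(TM_TM)\bigr)$, equals
\[
\frac{TM_TM}{cL}\Bigl(1+\log\frac{c\kappa_M L}{TM_TM}\Bigr),
\]
which is of order $(\log L)/L$. The factor $\log L=\log(-\log\eta)$ tends to infinity as $\|u\|_{H^1(0,T;L^2(\omega))}\to 0$, so it \emph{cannot} be absorbed into a constant $C$ uniform over $\mathcal{I}_M$; your argument proves only a $\dfrac{\log|\log|}{|\log|}$ rate, strictly weaker than the stated $\dfrac{1}{|\log|}$. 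The paper avoids this loss precisely because integrating the log-convex bound on $z$ gives
\[
\int_0^{T}\|z(\tau)\|_{L^2}\,\d\tau \le C\,\frac{\|z(T)\|_{L^2}-1}{\log\|z(T)\|_{L^2}},
\]
which is already $O\bigl(1/|\log\|z(T)\||\bigr)$ with no extra logarithm; observability on $z$ then bounds $\|z(T)\|_{L^2}$ by $\|u_t\|_{L^2(0,T;L^2(\omega))}\le\|u\|_{H^1(0,T;L^2(\omega))}$. To repair your argument you would need to replace the bound $\|u'(\tau)\|\le M_TM$ by the log-convexity estimate applied to $u'$ before integrating, at which point you recover essentially the paper's proof.
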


\begin{proof}
Let $C>0$ denote a generic constant that might vary from line to line. Setting $z=u_t$ and applying \eqref{obsineq1} to $z$, we obtain
\begin{equation}\label{in1}
\|z(T, \cdot)\|_{L^{2}\left(\mathbb{R}^N\right)} \leq C\|z\|_{L^{2}\left(0, T ; L^{2}(\omega)\right)}. 
\end{equation}
By applying Proposition \ref{proplc} to $z$, we see that
\begin{equation}\label{in2}
\left\|z(t, \cdot)\right\|_{L^{2}\left(\mathbb{R}^N\right)} \leq K_T M^{1-c\frac{t}{T}}\left\|z(T, \cdot)\right\|_{L^{2}\left(\mathbb{R}^N\right)}^{c\frac{t}{T}}, \qquad 0 \leq t \leq T, 
\end{equation}
for some constant $K_T>0$. Since
$$u(0, \cdot)=-\int_{0}^{T} z(\tau, \cdot) \d \tau + u(T, \cdot),$$
by the logarithmic convexity \eqref{in2} and the observability inequality \eqref{obsineq1}, we obtain
\begin{align*}
\|u(0, \cdot)\|_{L^{2}\left(\mathbb{R}^N\right)} &\leq \int_{0}^{T}\left\|z(\tau, \cdot)\right\|_{L^{2}\left(\mathbb{R}^N\right)} \d \tau + \|u(T, \cdot)\|_{L^{2}\left(\mathbb{R}^N\right)} \\
&\leq C \int_{0}^{\frac{T}{c}}\left\|z(T, \cdot)\right\|_{L^{2}\left(\mathbb{R}^N\right)}^{c\frac{\tau}{T}} \d \tau + C\|u\|_{L^{2}\left(0, T ; L^{2}(\omega)\right)} \\
&\leq C \frac{T}{c} \frac{\|z(T, \cdot)\|_{L^{2}\left(\mathbb{R}^N\right)}-1}{\log \|z(T, \cdot)\|_{L^{2}\left(\mathbb{R}^N\right)}} + C\|u\|_{L^{2}\left(0, T ; L^{2}(\omega)\right)}\\
& \le C \left(\frac{Z-1}{\log Z} + Z \right),
\end{align*}
where we used $T\le \frac{T}{c}$ ($0<c\le 1$) and we set $Z:=\|z(T, \cdot)\|_{L^{2}\left(\mathbb{R}^N\right)} + C\|u\|_{L^{2}\left(0, T ; L^{2}(\omega)\right)}$. Using the inequality \eqref{in1} (the norm $\|u\|_{H^{1}\left(0, T ; L^{2}(\omega)\right)}$ being small enough), we obtain
\begin{equation}
0 < Z \le C' \|u\|_{H^{1}\left(0, T ; L^{2}(\omega)\right)} <1 \label{eqt1}
\end{equation}
for a certain constant $C'>0$. With help of the elementary inequality
$$\dfrac{x -1}{\log x} +x \leq -\dfrac{1+ \mathrm{e}^{-2}}{\log x} \qquad \text{ for } 0<x <1,$$
we obtain
\begin{align*}
\|u_0\|_{L^{2}\left(\mathbb{R}^N\right)} & \leq \frac{-C}{\log \left(C'\|u\|_{H^1\left(0, T ; L^{2}(\omega)\right)}\right)}.
\end{align*}
\end{proof}

\begin{remark}
If we take $s=1$ and $Q=I_N$ (the diffusion matrix) in Theorem \ref{thmlogstab1}, we recover Theorem 4.1 of \cite{CM'23} as a particular case.
\end{remark}

\section{Conclusion and final comments}
We have studied some inverse problems for determining initial data in the Ornstein-Uhlenbeck equation from sensor sets that are thick. Our findings demonstrate that the logarithmic convexity and the observability inequality imply logarithmic stability for admissible sets of initial data. 

We have considered both analytic and non-analytic cases depending on the space $L^2_\mu\left(\mathbb R^N\right)$ with the invariant measure or the space $L^2\left(\mathbb R^N\right)$ with the Lebesgue measure. In the latter case, we have considered a general framework of the Ornstein-Uhlenbeck equation with fractional diffusion. Moreover, we allow for diffusion matrices $Q$ that are constant, symmetric positive definite. Then it would be of much interest to extend our results for the possibly degenerate hypoelliptic operator $-\mathrm{tr}^s\left(-Q\nabla^2 \cdot\right) + B x\cdot \nabla \cdot$, where $Q$ is only positive semidefinite.

Finally, we emphasize that in the analytic case of the invariant measure, we have only considered the integer case $s=1$. More recently, the existence of a unique invariant measure for the fractional case $s\in (0,1)$ has been proven in \cite[Subsection 6.1]{AFP'22}. However, the analyticity of the associated $C_0$-semigroup still needs to be explored in order to extend our stability results in this case.

\newpage
\bibliographystyle{unsrt}

\end{document}